\documentclass{amsart}
\usepackage{amsmath} 
\usepackage{amssymb}
\usepackage{amsfonts}
\usepackage{bm}
\usepackage{fixmath}
\usepackage{amsthm}
\usepackage{color}
\usepackage{MnSymbol}
\usepackage{stmaryrd}
\usepackage[utf8]{inputenc}
\usepackage{titlesec}
\usepackage[margin=2.5cm]{geometry}
\usepackage[T1]{fontenc}

\usepackage{enumitem}
\setlist{itemsep=1em,topsep=1em}

\usepackage{mathtools}
\theoremstyle{plain}

\begin{document}

\title{A Counterexample to an Endpoint Mixed Norm Estimate of Calder\'on-Zygmund Operators}
\author{Zehan Hu}
\maketitle

\begin{abstract}

It is known that that the endpoint mixed norm estimate $|| \, ||Tf(x,y)||_{L_{x}^{p}}||_{L_{y}^{\infty}} \lesssim  || \, ||f(x,y)||_{L_{x}^{p}}||_{L_{y}^{\infty}}$ in general does not hold for Calder\'on-Zygmund operator $T$. In this article, we show that when $p=2$, even if we make the right hand side of the above estimate larger by replacing it with $ || \, ||e^{x^2+y^2}f(x,y)||_{L_{y}^{\infty}}||_{L_{x}^{\infty}} $, the estimate does not hold for the double Riesz transform given by the kernel $K(x,y)=\frac{xy}{2\pi(x^2+y^2)^{2}}$. As a consequence we will show that the mixed norm estimate $|| \, ||Tf(x,y)||_{L_x^{p}}||_{L_y^{\infty}} \lesssim|| \, ||f(x,y)||_{L_y^{\infty}}||_{L_x^{p}}$ does not hold for double Riesz transform and $p \geq 2$.

\end{abstract}

\section*{Introduction}

\newtheorem*{def1.1}{Definition 1.1}
\begin{def1.1}
$\quad$  A kernel $K: \mathbb{R}^n-\{0\} \to \mathbb{C}$ is called a Calder\'on-Zygmund kernel if there exists some constant $B$ such that
  \begin{enumerate}
      \item $|K(x)|<B|x|^{-n}$
      \item $\int_{|x|>2|y|} |K(x)-K(x-y)| dx \leq B$ for all $|y|>0$
      \item $\int_{r<|x|<s} K(x) dx =0$ for $0<r<s<\infty$.
  \end{enumerate}
\end{def1.1}

\newtheorem*{def1.2}{Definition 1.2}
\begin{def1.2}  
For a Calder\'on-Zygmund kernel $K$ and $f\in \mathcal{S}(\mathbb{R}^n)$, let 
$$Tf(x)=\lim_{\epsilon \to 0}\int_{|x-y|>\epsilon} K(x-y)f(y) dy .$$
$T$ is called the Calder\'on-Zygmund operator with kernel $K$.
\end{def1.2}

The followings are two examples of Calder\'on-Zygmund operators. 

\newtheorem*{def1.3}{Example 1.3}
\begin{def1.3}
The Riesz transforms $R_j$'s are the Calder\'on-Zygmund operators given by kernels $K_j(x)$ $=\frac{x_j}{|x|^{n+1}} $
\end{def1.3}

\newtheorem*{def1.4}{Example 1.4}
\begin{def1.4}  
  The double Riesz transforms $R_{ij}$'s are the Calder\'on-Zygmund operators given by kernels \[
    K_{ij}(x)= 
    \begin{cases}
    C_n\frac{x_ix_j}{|x|^{n+2}}   & \text{if } i \not= j \\
    C_n\frac{x_i^2-n^{-1}|x|^2}{|x|^{n+2}}       &\text{if } i=j
    \end{cases}
  \]where $C_n$'s are some dimensional constants.
\end{def1.4}
	By computation, for Schwartz function $u$, 
	\begin{align}
	\frac{\partial^2 u}{\partial x_i \partial x_j} (x)=R_{ij}(\Delta u)(x)+\frac{1}{n} \delta_{ij} \Delta u(x) .
	\end{align}
    
So, in particular, the $L^p$ boundedness of $R_{ij}$ is equivalent to the following $L^p$ estimate: $$ ||\frac{\partial^2 u}{\partial x_i \partial x_j} (x)||_{L^p} \leq C||\Delta u(x) ||_{L^p}.$$
 
A Calder\'on-Zygmund operator $T$ can be extended to an bounded operator on $L^p(\mathbb{R}^n)$:

\newtheorem*{thm}{Theorem 1.5 (Calder\'on-Zygmund)}
  \begin{thm}
    For $T$ a Calder\'on-Zygmund operator and $f\in \mathcal{S}(\mathbb{R}^n)$,
    $$ ||Tf(x)||_{L^p} \leq C_p || f(x) ||_{L^p} $$ for $1<p<\infty$.
    Hence, $T$ can be extended to a bounded operator on $L^p(\mathbb{R}^n)$ for $1<p<\infty$. This fails when $p=1$ or $\infty$
  \end{thm}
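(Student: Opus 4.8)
The plan is to establish the bound first for $p=2$ by Fourier analysis, then deduce a weak-type $(1,1)$ estimate via the Calder\'on--Zygmund decomposition, interpolate to cover $1<p\le 2$, dualize to cover $2\le p<\infty$, and finally use density of $\mathcal S(\mathbb R^n)$ in $L^p$ to obtain the bounded extension; the endpoint failures will be shown by an explicit example. For the $L^2$ step I would work with the truncated kernels $K_\epsilon=K\cdot\mathbf{1}_{\{|x|>\epsilon\}}$, so that $T_\epsilon f=K_\epsilon*f$ is a genuine convolution operator, and reduce matters to the uniform bound $\|\widehat{K_\epsilon}\|_{L^\infty}\le CB$. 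Fixing $\xi\ne0$, I would split $\widehat{K_\epsilon}(\xi)$ into the part over $|x|\le|\xi|^{-1}$ and the part over $|x|>|\xi|^{-1}$: on the first piece, condition (3) allows subtracting the constant $1$ from $e^{-2\pi i x\cdot\xi}$, and then $|e^{-2\pi i x\cdot\xi}-1|\lesssim|x|\,|\xi|$ combined with condition (1) gives a bound; on the second piece I would translate by $y=\xi/(2|\xi|^2)$ so that the exponential changes sign, rewrite the integral using $K_\epsilon(x)-K_\epsilon(x-y)$, and control it by the Hörmander condition (2) (plus lower-order boundary terms from the truncation, handled by (1)). Passing $\epsilon\to0$ then yields $\|Tf\|_2\le CB\|f\|_2$.

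For the weak $(1,1)$ estimate, given $f\in\mathcal S$ and $\alpha>0$ I would apply the Calder\'on--Zygmund decomposition at height $\alpha$, writing $f=g+b$ with $b=\sum_j b_j$, where the $b_j$ are supported on cubes $Q_j$ with disjoint interiors, $\int b_j=0$, $\|b_j\|_1\lesssim\alpha|Q_j|$, $\sum_j|Q_j|\lesssim\alpha^{-1}\|f\|_1$, and $\|g\|_\infty\lesssim\alpha$ with $\|g\|_1\le\|f\|_1$. The good part is handled by $\|g\|_2^2\lesssim\alpha\|f\|_1$, Step 1, and Chebyshev. For the bad part, letting $Q_j^*$ be the concentric dilate of $Q_j$ (by a fixed factor, so that $Q_j\subset Q_j^*$ with room to spare) and $\Omega^*=\bigcup_jQ_j^*$, I would use $|\Omega^*|\lesssim\alpha^{-1}\|f\|_1$ and estimate, for $x\notin\Omega^*$, $|Tb_j(x)|=\bigl|\int_{Q_j}(K(x-y)-K(x-y_j))b_j(y)\,dy\bigr|$ where $y_j$ is the center of $Q_j$; integrating in $x$ over $(Q_j^*)^c$ and applying condition (2) gives $\int_{(Q_j^*)^c}|Tb_j|\lesssim B\|b_j\|_1$, and summing in $j$ yields $\int_{(\Omega^*)^c}|Tb|\lesssim B\|f\|_1$. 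Combining the two parts gives $|\{|Tf|>\alpha\}|\lesssim\alpha^{-1}\|f\|_1$.

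Next I would apply the Marcinkiewicz interpolation theorem to the weak $(1,1)$ and strong $(2,2)$ bounds to get $\|Tf\|_p\le C_p\|f\|_p$ for $1<p\le2$. Observing that the formal adjoint $T^*$ is the Calder\'on--Zygmund operator with kernel $\widetilde K(x)=\overline{K(-x)}$, which still satisfies (1)--(3), the same argument gives boundedness of $T^*$ on $L^p$ for $1<p\le2$, hence of $T$ on $L^{p'}$ for $2\le p'<\infty$ by duality; density of Schwartz functions then gives the bounded extension on $L^p(\mathbb R^n)$ for $1<p<\infty$. For the failure at $p=\infty$, I would take $n=1$ and $K(x)=1/(\pi x)$ — a Calder\'on--Zygmund kernel and, up to a constant, the Riesz kernel $K_1$ — with $f=\mathbf{1}_{[0,1]}$: then $Tf(x)=\tfrac1\pi\log\bigl|\tfrac{x}{x-1}\bigr|$ is unbounded near $x=0$ and $x=1$ while $\|f\|_\infty=1$, so $T$ is not bounded on $L^\infty$; and since this same $Tf$ satisfies $|Tf(x)|\sim c|x|^{-1}$ as $|x|\to\infty$, $Tf\notin L^1$ although $f\in L^1$, so $T$ is not bounded on $L^1$ (alternatively, the $L^1$ failure follows from the $L^\infty$ one by duality).

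The main obstacle I anticipate is the $L^2$ bound: condition (1) by itself is not enough to make $\widehat{K_\epsilon}$ bounded, since $\int_{|x|>1}|K(x)|\,dx$ may diverge, so one is forced to use cancellation twice — from (3) near the origin and from the Hörmander condition (2) via the sign-flipping translation at infinity — and to keep careful track of the error terms generated by the truncation at scale $\epsilon$, which must be shown to remain bounded and to behave well in the limit. Once the $L^2$ bound is in hand, the remaining steps are the standard Calder\'on--Zygmund real-variable arguments.
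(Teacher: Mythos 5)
The paper does not prove Theorem 1.5; it is quoted as the classical Calder\'on--Zygmund theorem, so there is no in-text argument to compare against. Your sketch is the standard textbook proof (as in Grafakos or Muscalu--Schlag, both in the paper's bibliography): the $L^2$ bound via uniform control of $\widehat{K_\epsilon}$ using the cancellation condition near the origin and the sign-flipping translation together with the H\"ormander condition at infinity, then the Calder\'on--Zygmund decomposition for weak $(1,1)$, Marcinkiewicz interpolation for $1<p\le 2$, and duality through $T^*$ with kernel $\overline{K(-x)}$ for $2\le p<\infty$. You also correctly flag the main technical irritant in the $L^2$ step, namely the boundary terms created by the $\epsilon$-truncation when the H\"ormander estimate is applied. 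This is all sound, and it is essentially \emph{the} proof.

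Two small points worth tightening. First, the endpoint counterexample $f=\mathbf{1}_{[0,1]}$ is not Schwartz, whereas the theorem as stated concerns $f\in\mathcal S(\mathbb R^n)$; to exhibit failure you should take a smooth approximating sequence $f_k$ with $0\le f_k\le 1$, $f_k\equiv 1$ on $[1/k,1-1/k]$ and supported in $[0,1]$, for which $\|f_k\|_\infty=1$ while $\sup_x|Hf_k(x)|\to\infty$, and similarly $\|f_k\|_1$ stays bounded while $\|Hf_k\|_1\to\infty$ (for instance by Fatou against $H\mathbf{1}_{[0,1]}(x)\sim c/|x|$ at infinity). Second, the ``$L^1$ failure by duality'' remark is fine in spirit but needs the observation that $T^*$ is again a Calder\'on--Zygmund operator (for the Hilbert transform, $T^*=-T$), since $L^1$ is not the dual of $L^\infty$ and the implication one actually uses is that $L^1$-boundedness of $T$ forces $L^\infty$-boundedness of $T^*$. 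Neither of these affects the structure of your argument.
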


It is also known that the following mixed norm estimate holds for Calder\'on-Zygmund operators given by Calder\'on-Zygmund kernels with some additional regularity assumptions. [1, p.448, Theorem 1].

	\newtheorem*{thm6}{Theorem 1.6 (Mixed norm estimates)}
  \begin{thm6}
    For $T$, a Calder\'on-Zygmund operator given by kernel $K$ defined on $\mathbb{R}^2-\{0\}$ satisfying additional regularity conditions
	\[ 
		|\partial_2 K(x,y)| \leq A (x^2+y^2)^{-\frac{3}{2}} \quad
		|\partial_1 \partial_2 K(x,y)| \leq A (x^2+y^2)^{-2} 
	\]
	we have 
    $$|| \, ||Tf(x,y)||_{L_{x}^{p}}||_{L_{y}^{q}} \leq C || \, ||f(x,y)||_{L_{x}^{p}}||_{L_{y}^{q}}$$ for $1<p,q<\infty$
  \end{thm6} 
  \newtheorem*{rmk4}{Remark}
  \begin{rmk4}
    The above theorem no longer holds when $q=1$ or $\infty$. For example, consider the the double Riesz transform given by the kernel $K(x,y)=\frac{xy}{2\pi(x^2+y^2)^{2}}$ and a sequence of compactly supported functions with supports shrinking to a point. Indeed, it can be easily verified that the kernel corresponding to $R_{12}$ satisfies the additional regularity conditions in Theorem 1.6.
  \end{rmk4}

  \newtheorem*{Question}{Question}
  \begin{Question}
    We are curious about what will happen if we make the right hand side of the above mixed norm estimate larger by changing the order of the two norms.
	That is, for $T$, a Calder\'on-Zygmund operator given by a kernel $K$ defined on $\mathbb{R}^2-\{0\}$, can we always find a constant $C_p$ such that we have $|| \, ||Tf(x,y)||_{L_x^{p}}||_{L_y^{\infty}} \leq C_p || \, ||f(x,y)||_{L_y^{\infty}}||_{L_x^{p}}$?
	
  \end{Question}

	Note that the sequence in Remark following Theorem 1.6 does not contradict the following new estimate $|| \, ||Tf(x,y)||_{L_x^{p}}||_{L_y^{\infty}} \lesssim || \, ||f(x,y)||_{L_y^{\infty}}||_{L_x^{p}}$. 

In the next section we will see that we cannot have such estimate for the double Riesz transform $R_{12}$ given by the kernel $K(x,y)=\frac{xy}{2\pi(x^2+y^2)^{2}}$ and $p = 2$. In fact, we will show something stronger: the estimate  $|| \, ||R_{12}f(x,y)||_{L_x^{2}}||_{L_y^{\infty}} \lesssim || \, ||e^{x^2+y^2}f(x,y)||_{L_y^{\infty}}||_{L_x^{\infty}}$ does not hold. Then we will show by interpolation that the mixed norm estimate $|| \, ||Tf(x,y)||_{L_x^{p}}||_{L_y^{\infty}} \lesssim|| \, ||f(x,y)||_{L_y^{\infty}}||_{L_x^{p}}$ does not hold for this double Riesz transform and $p \geq 2$.

\section*{Main Results}

We first give a counterexample to the estimate $|| \, ||Tf(x,y)||_{L_x^{p}}||_{L_y^{\infty}} \lesssim || \, ||e^{x^2+y^2}f(x,y)||_{L_y^{\infty}}||_{L_x^{\infty}}$ for $T=R_{12}$, a double Riesz transform, and $p=2$. Throughout the passage, we use the following convention of Fourier transform and inverse Fourier transform:

\begin{align*}
\mathcal{F}(f)(\xi)=\left( \frac{1}{\sqrt{2\pi}} \right)^n \int e^{-ix \cdot \xi} f(x) dx \\
\mathcal{F}^{-1}(f)(\xi)=\left( \frac{1}{\sqrt{2\pi}} \right)^n \int e^{ix \cdot \xi} f(x) dx 
\end{align*}
for function $f$ on $\mathbb{R}^n$.

\newtheorem*{theorem2}{Proposition 2.1 (A Counterexample)}
\begin{theorem2} 
Let $R_{12}$ be the double Riesz transform given by the kernel $K(x,y)=\frac{xy}{2\pi(x^2+y^2)^{2}}$. Then there exists a sequence of Schwartz functions $g_n$ such that the sequence 
$\{ || \, ||R_{12}g_n(x,y)||_{L_{x}^{2}}||_{L_{y}^{\infty}} \}$ goes to infinity, while the the sequence $\{ || \, ||e^{x^2+y^2}g_n(x,y)||_{L_{y}^{\infty}}||_{L_{x}^{\infty}} \}$ stays bounded.

\end{theorem2}

\begin{proof}
We first construct a sequence of Schwartz functions $f_j$'s as follows. Define $\chi_1^{(j)}$ to be a smooth bump function supported on $[2^{-j}, 2^{-j+1}]$ such that $\chi_1^{(j)}$ equals to $1$ on $[2^{-j}+2^{-j-10}, 2^{-j+1}-2^{-j-10}]$. Let $\chi_2(x)=e^{-x^2}$. Define $\chi$ to be a smooth bump function supported on $[0,A]$ for $3 \leq A \leq 100$ such that $\chi$ equals $1$ on $[\frac{1}{4}, A-\frac{1}{4}]$. Define $\chi_3^{(j)}=\chi(x-2^j)+\chi(-x+2^j)$. Now we set $f_j(x,y)=\chi_1^{(j)}(y)\chi_2(x)\mathcal{F}^{-1}(\chi_3^{(j)})(x)$. Now, for $n>100$, define $g_n=\sum_{j=100}^n f_j$.

Next, we show that $\{ || \, ||e^{x^2+y^2}g_n(x,y)||_{L_{y}^{\infty}}||_{L_{x}^{\infty}} \}$ and $\{ || \, ||g_n(x,y)||_{L_{y}^{\infty}}||_{L_{x}^{2}} \}$ are bounded by some fixed constant.

\begin{align*}
|\mathcal{F}^{-1}(\chi_3^{(j)})(x)| & \leq  |\mathcal{F}^{-1}(\chi(\xi-2^j))(x)|+ |\mathcal{F}^{-1}(\chi(-\xi+2^j))(x)| \\
& = |\frac{1}{2\pi}\int e^{ i x \cdot \xi} \chi(\xi-2^j) d\xi|+|\frac{1}{2\pi}\int e^{ i x \cdot \xi} \chi(-\xi+2^j) d\xi| \\
&\leq \sup|\mathcal{F}^{-1}(\chi)| + \sup|\mathcal{F}(\chi)| .
\end{align*}

Let $D = \sup|\mathcal{F}^{-1}(\chi)| + \sup|\mathcal{F}(\chi)|$. Thus,
$$|f_j(x,y)| \leq |e^{-x^2} \mathcal{F}^{-1}(\chi_3^{(j)})(x)| \leq De^{-x^2}.$$

Since $f_j$'s are supported on $[2^{-j}, 2^{-j+1}]$ in $y$ and $|e^{y^2}\chi_1^{(j)}(y)| \leq e$ for $j \geq 100$, we have 

$$ || e^{x^2} g_n(x,y) ||_{L_{y}^\infty} \leq \sup_{100\leq j \leq n} || e^{x^2} f_j(x,y) ||_{L_{y}^\infty} \leq \sup_{100\leq j \leq n} | \mathcal{F}^{-1}(\chi_3^{(j)})(x)| \leq D.$$

$$ || e^{x^2+y^2} g_n(x,y) ||_{L_{y}^\infty} \leq \sup_{100\leq j \leq n} || e^{x^2+y^2} f_j(x,y) ||_{L_{y}^\infty} \leq \sup_{100\leq j \leq n} | e \mathcal{F}^{-1}(\chi_3^{(j)})(x)| \leq De.$$

Therefore, $||\,|| e^{x^2+y^2} g_n(x,y) ||_{L_{y}^\infty}||_{L_{x}^\infty}$ is bounded.

In particular, 
$$ ||\,|| g_n(x,y) ||_{L_{y}^\infty} ||_{L_{x}^2} \leq || e^{-x^2} ||_{L_{x}^2} \, ||\, || e^{x^2} g_n(x,y) ||_{L_{y}^\infty}||_{L_{x}^\infty} \\ \leq D|| e^{-x^2} ||_{L_{x}^2}.$$

Therefore,  $\{ || \, ||g_n(x,y)||_{L_{y}^{\infty}}||_{L_{x}^{2}} \}$ is bounded.

In order to show that $\{ || \, ||R_{12}g_n(x,y)||_{L_{x}^{2}}||_{L_{y}^{\infty}} \}$ goes to infinity, it suffices to show that $\{ ||R_{12}g_n(x,0)||_{L_{x}^{2}} \}$ goes to infinity. 
\begin{align*}
|| R_{12}g_n(x,0) ||_{L_{x}^2}^2 &= \int \left| \sum_{i=100}^n R_{12}f_i(x,0) \right|^2 dx=\int \left| \sum_{i=100}^n \mathcal{F}_1(R_{12}f_i)(\xi_1,0) \right|^2 d\xi_1 \\
& \geq \sum_{j=100}^n \int_{2^j+1}^{2^j+A-1}  \left| \sum_{i=100}^n \mathcal{F}_1(R_{12}f_i)(\xi_1,0) \right|^2 d\xi_1.
\end{align*}

We begin with computing $|\mathcal{F}_1(R_{12}f)(\xi_1,y)|$ as follows, where $\mathcal{F}_{i}$ is the fourier transform with respect to the $i^{th}$ coordinate. 
$$|\mathcal{F}_1(R_{12}f)(\xi_1,0)| =  \left| \mathcal{F}_2^{-1} \left( \ \frac{\xi_1\xi_2}{|\xi|^2} \mathcal{F}f  \right) (\xi_1,0) \right| = C \left| \int \mathcal{F}_1(f)(\xi_1,z)K(\xi_1, -z) dz \right|, $$
where $C>0$ is some constant, and
$$ K(\xi_1, z)= \frac{1}{\sqrt{2\pi}}\int e^{i z \xi_2} \frac{\xi_1\xi_2}{|\xi|^2} d\xi_2= \frac{1}{\sqrt{2\pi}} \int e^{i \xi_1 z \eta} \frac{\eta}{1+\eta^2}\xi_1 d\eta = \xi_1  \mathcal{F}_{\eta}^{-1}(\frac{\eta}{1+\eta^2}) (\xi_1z) . $$ 

Thus, 
\begin{align*}
|\mathcal{F}_1(R_{12}f_j)(\xi_1,0)| &= C \left| \int \mathcal{F}_1(f_j)(\xi_1,z) \xi_1  \mathcal{F}_{\eta}^{-1}(\frac{\eta}{1+\eta^2}) (-\xi_1z) dz \right| \\
&= C\sqrt{\frac{\pi}{2}} \left| \int \mathcal{F}_1(f_j)(\xi_1,z) \xi_1 e^{-|\xi_1||z|}  dz \right| \\
&= C\sqrt{\frac{\pi}{2}} \left| \left(\mathcal{F}(\chi_2) \ast \chi_3^{(j)}  \right)(\xi_1) \right| \, |\xi_1| \, \left| \int \chi_1^{(j)}(z) e^{-|\xi_1||z|}  dz  \right|.
\end{align*}

Note that by our construction of $\chi_1^{(j)}$, 
$$ \int_{2^{-j}+2^{-j-10}}^{2^{-j+1}-2^{-j-10}} e^{-|\xi_1|z} dz \leq \left| \int \chi_1^{(j)}(z) e^{-|\xi_1||z|}  dz  \right| \leq \int_{2^{-j}}^{2^{-j+1}} e^{-|\xi_1|z}  dz.$$

Now we estimate $\int_{2^j+1}^{2^j+A-1}  \left| \sum_{i=100}^n \mathcal{F}_1(R_{12}f_i)(\xi_1,0) \right|^2 d\xi_1$. To this end, we estimate $\left|\mathcal{F}_1(R_{12}f_j)(\xi_1,0) \right|$ from below for $\xi_1 \in [2^j+1, 2^j+A-1]$. 

\begin{align*}
|\mathcal{F}_1(R_{12}f_j)(\xi_1,0)| &\geq C\sqrt{\frac{\pi}{2}} \left| \left(\mathcal{F}(\chi_2) \ast \chi_3^{(j)}  \right)(\xi_1)\right| \, \left|  e^{-|\xi_1|(2^{-j}+2^{-j-10})} -e^{-|\xi_1|(2^{-j+1}-2^{-j-10})} \right| \\
&\geq C\sqrt{\frac{\pi}{2}} (e^{-2-2^{-9}}-e^{-4+2^{-9}})\left| \left( \mathcal{F}(\chi_2) \ast \chi_3^{(j)}  \right)(\xi_1) \right| \\
&\geq C\frac{\sqrt{\pi}}{2} (e^{-2-2^{-9}}-e^{-4+2^{-9}}) \int_{-\frac{1}{4}}^{\frac{1}{4}} e^{-\frac{\xi^2}{4}}\chi_3^{(j)}(\xi_1-\xi) d\xi \\
&\geq C\frac{\sqrt{\pi}}{2} (e^{-2-2^{-9}}-e^{-4+2^{-9}}) \int_{-\frac{1}{4}}^{\frac{1}{4}} e^{-\frac{\xi^2}{4}} d\xi,
\end{align*} 
where the second inequality is due to the fact that $e^{-|x|(2^{-j}+2^{-j-10})} -e^{-|x|(2^{-j+1}-2^{-j-10})}$ is decreasing on $ [2^j+1, \infty)$, and the last inequality is due to our construction of $\chi_3^{(j)}$.

Set $E=\frac{\sqrt{\pi}}{2} (e^{-2-2^{-9}}-e^{-4+2^{-9}}) \int_{-\frac{1}{4}}^{\frac{1}{4}} e^{-\frac{\xi^2}{4}} d\xi>0$. 

Next, we estimate $\left|\mathcal{F}_1(R_{12}f_i)(\xi_1,0) \right|$ from above for $\xi_1 \in [2^j+1, 2^j+A-1]$ and $i<j$.
\begin{align*}
\left|\mathcal{F}_1(R_{12}f_i)(\xi_1,0) \right| & \leq C \sqrt{\frac{\pi}{2}} \left| \left(\mathcal{F}(\chi_2) \ast \chi_3^{(i)}  \right)(\xi_1) \left( e^{-|\xi|2^{-i}} -e^{-|\xi|2^{-i+1}} \right) \right| \\
& \leq \frac{C}{4}\sqrt{\frac{\pi}{2}} \left| \left(\mathcal{F}(\chi_2) \ast \chi_3^{(i)}  \right)(\xi_1) \right| \\
& \leq \frac{C}{4}\frac{\sqrt{\pi}}{2} \int_{2^i}^{2^i+A} e^{-\frac{(\xi_1-\xi)^2}{4}} d\xi + \frac{C}{4}\frac{\sqrt{\pi}}{2} \int_{-2^i-A}^{-2^i} e^{-\frac{(\xi_1-\xi)^2}{4}} d\xi \\
& \leq C \frac{\sqrt{\pi}}{2} \int_{2^i}^{2^i+A} (\xi_1-\xi)^{-2} d\xi + C\frac{\sqrt{\pi}}{2} \int_{-2^i-A}^{-2^i} (\xi_1-\xi)^{-2} d\xi \\
& = C\frac{\sqrt{\pi}}{2} \left( \frac{1}{\xi_1-2^i-A} - \frac{1}{\xi_1-2^i} + \frac{1}{\xi_1+2^i}-\frac{1}{\xi_1+2^i+A} \right) \\
& \leq C\sqrt{\pi} \frac{1}{\xi_1-2^i-A} \leq 4C\sqrt{\pi}2^{-j}.
\end{align*}

Next, we estimate $\left|\mathcal{F}_1(R_{12}f_i)(\xi_1,0) \right|$ from above for $\xi_1 \in [2^j+1, 2^j+A-1]$ and $i>j$. By the same calculation, 
\begin{align*}
\left|\mathcal{F}_1(R_{12}f_i)(\xi_1,0) \right| &\leq C\frac{\sqrt{\pi}}{2} \left( \frac{1}{\xi_1-2^i-A} - \frac{1}{\xi_1-2^i} + \frac{1}{\xi_1+2^i}-\frac{1}{\xi_1+2^i+A} \right) \\
& \leq  C\sqrt{\pi} \frac{1}{2^i-\xi_1} \leq 4C\sqrt{\pi}2^{-i}.
\end{align*}

Now we estimate $ \left| \sum_{i=100}^n \mathcal{F}_1(R_{12}f_i)(\xi_1,0) \right|$ from below for $\xi_1 \in [2^j+1, 2^j+A-1]$. 
\begin{align*}
\left| \sum_{i=100}^n \mathcal{F}_1(R_{12}f_i)(\xi_1,0) \right| &\geq \left| \mathcal{F}_1(R_{12}f_j)(\xi_1,0) \right| - \sum_{100\leq i<j} \left|  \mathcal{F}_1(R_{12}f_i)(\xi_1,0) \right|-\sum_{j< i\leq N} \left|  \mathcal{F}_1(R_{12}f_i)(\xi_1,0) \right| \\
& \geq CE - (j-100)4C\sqrt{\pi}2^{-j} - 4C\sqrt{\pi}2^{-j} \\
& \geq CE - 4C\sqrt{\pi}j2^{-j} \geq C(E - 4\sqrt{\pi}(100) 2^{-100}) >0.
\end{align*}

Hence, 
\begin{align*}
||R_{12}g_n(x,0)||_{L_{x}^{2}}^2 &\geq \sum_{j=100}^n \int_{2^j+1}^{2^j+A-1}  \left| \sum_{i=100}^n \mathcal{F}_1(R_{12}f_i)(\xi_1,0) \right|^2 d\xi_1 \\
& \geq \sum_{j=100}^n \int_{2^j+1}^{2^j+A-1} (CE - 4C\sqrt{\pi}j2^{-j})^2 d\xi_1 \\
& \geq \sum_{j=100}^n (A-2) (CE - 4C\sqrt{\pi}(100)2^{-100})^2 \\
& \geq (n-100)(A-2) (CE - 4C\sqrt{\pi}(100)2^{-100})^2 .
\end{align*}

Therefore, $\{ ||R_{12}g_n(x,0)||_{L_{x}^{2}} \}$ goes to infinity and so does $\{ || \, ||R_{12}g_n(x,y)||_{L_{x}^{2}}||_{L_{y}^{\infty}} \}$.

\end{proof}

Therefore, we do not have the estimate $|| \, ||R_{12}f(x,y)||_{L_x^{2}}||_{L_y^{\infty}} \lesssim || \, ||e^{x^2+y^2}f(x,y)||_{L_y^{\infty}}||_{L_x^{\infty}}$ or the estimate $|| \, ||R_{12}f(x,y)||_{L_x^{2}}||_{L_y^{\infty}} \lesssim || \, ||f(x,y)||_{L_y^{\infty}}||_{L_x^{2}}$ for the double Riesz transform $R_{12}$ defined above. 

Next, we show that the estimate $|| \, ||R_{12}f(x,y)||_{L_x^{p}}||_{L_y^{\infty}} \lesssim || \, ||f(x,y)||_{L_y^{\infty}}||_{L_x^{p}}$ does not hold for $p>2$. To this end, we prove the following interpolation theorem and a mixed weak $L^1$ estimate. 

\newtheorem*{theorem*}{Proposition 2.2 (An Interpolation theorem)}
\begin{theorem*}

Let T be a sublinear operator from $L^{p_0}(\mathbb{R},L^{\infty}(\mathbb{R})) + L^{p_1}(\mathbb{R},L^{\infty}(\mathbb{R}))$ to the space of measurable functions. Assume that for $p_0<p_1<\infty$ we have 
$$|| \, ||Tf(x,y)||_{L_x^{p_0,\infty}}||_{L_y^{\infty}} \leq A_0 || \, ||f(x,y)||_{L_y^{\infty}}||_{L_x^{p_0}}$$
$$|| \, ||Tf(x,y)||_{L_x^{p_1,\infty}}||_{L_y^{\infty}} \leq A_1 || \, ||f(x,y)||_{L_y^{\infty}}||_{L_x^{p_1}}.$$
Then for $p_0<p<p_1$, we have 
$$|| \, ||Tf(x,y)||_{L_x^{p}}||_{L_y^{\infty}} \leq C(p_0,p_1,p,A_0,A_1) || \, ||f(x,y)||_{L_y^{\infty}}||_{L_x^{p}}$$
where $C(p_0,p_1,p,A_0,A_1)$ is a constant depending on $p_0,p_1,p,A_0,A_1$.
\end{theorem*}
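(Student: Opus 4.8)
The plan is to adapt the classical Marcinkiewicz interpolation argument, carrying out the decomposition in the $x$-variable and exploiting the fact that the inner $L^\infty_y$ norm commutes with any truncation depending on $x$ alone. Set $g(x):=\|f(x,\cdot)\|_{L^\infty_y}$, so that the right-hand side of all three displayed inequalities is a scalar quantity, $\|g\|_{L^p_x}$ or $\|g\|_{L^{p_i}_x}$. First I would verify that $Tf$ is defined for $f$ with $g\in L^p_x$: writing $f=f\mathbf{1}_{\{g>1\}}+f\mathbf{1}_{\{g\le1\}}$, the inequalities $\int_{\{g>1\}}g^{p_0}\le\int g^p$ and $\int_{\{g\le1\}}g^{p_1}\le\int g^p$ (valid since $p_0<p<p_1$) show $f\in L^{p_0}(\mathbb R,L^\infty)+L^{p_1}(\mathbb R,L^\infty)$.

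For the estimate itself, fix $\alpha>0$ and let $G_\alpha:=\{x\in\mathbb R:g(x)>\alpha\}$. Since $\mathbf{1}_{G_\alpha}$ depends only on $x$, the pieces $f\mathbf{1}_{G_\alpha}$ and $f\mathbf{1}_{G_\alpha^c}$ satisfy $\|(f\mathbf{1}_{G_\alpha})(x,\cdot)\|_{L^\infty_y}=g(x)\mathbf{1}_{G_\alpha}(x)$ and $\|(f\mathbf{1}_{G_\alpha^c})(x,\cdot)\|_{L^\infty_y}=g(x)\mathbf{1}_{G_\alpha^c}(x)$; as above, this places $f\mathbf{1}_{G_\alpha}\in L^{p_0}(L^\infty)$ and $f\mathbf{1}_{G_\alpha^c}\in L^{p_1}(L^\infty)$, so the two hypotheses apply to them. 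Now fix $y$. By sublinearity, $|Tf(x,y)|\le|T(f\mathbf{1}_{G_\alpha})(x,y)|+|T(f\mathbf{1}_{G_\alpha^c})(x,y)|$, hence
\begin{align*}
\bigl|\{x:|Tf(x,y)|>\alpha\}\bigr|
&\le\bigl|\{x:|T(f\mathbf{1}_{G_\alpha})(x,y)|>\tfrac\alpha2\}\bigr|+\bigl|\{x:|T(f\mathbf{1}_{G_\alpha^c})(x,y)|>\tfrac\alpha2\}\bigr|\\
&\le\Bigl(\tfrac\alpha2\Bigr)^{-p_0}\Bigl(\sup_{y'}\|T(f\mathbf{1}_{G_\alpha})(\cdot,y')\|_{L^{p_0,\infty}_x}\Bigr)^{p_0}+\Bigl(\tfrac\alpha2\Bigr)^{-p_1}\Bigl(\sup_{y'}\|T(f\mathbf{1}_{G_\alpha^c})(\cdot,y')\|_{L^{p_1,\infty}_x}\Bigr)^{p_1}\\
&\le\Bigl(\tfrac{2A_0}{\alpha}\Bigr)^{p_0}\int_{G_\alpha}g^{p_0}\,dx+\Bigl(\tfrac{2A_1}{\alpha}\Bigr)^{p_1}\int_{G_\alpha^c}g^{p_1}\,dx,
\end{align*}
where the second line is the weak-$L^{p_i}_x$ quasinorm bound followed by the trivial passage to the supremum over all $y'$, and the third line applies the two hypotheses to $f\mathbf{1}_{G_\alpha}$ and $f\mathbf{1}_{G_\alpha^c}$. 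The decisive point is that this final bound does not depend on $y$.

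Feeding this into the layer-cake identity $\|Tf(\cdot,y)\|_{L^p_x}^p=p\int_0^\infty\alpha^{p-1}|\{x:|Tf(x,y)|>\alpha\}|\,d\alpha$ and then taking the supremum over $y$ (harmless, since the bound is $y$-free) yields
\[
\|\,\|Tf\|_{L^p_x}\|_{L^\infty_y}^p\le p\int_0^\infty\alpha^{p-1}\Bigl[\Bigl(\tfrac{2A_0}{\alpha}\Bigr)^{p_0}\int_{G_\alpha}g^{p_0}+\Bigl(\tfrac{2A_1}{\alpha}\Bigr)^{p_1}\int_{G_\alpha^c}g^{p_1}\Bigr]d\alpha.
\]
By Tonelli's theorem I would perform the $\alpha$-integrals first, using $\int_0^{g(x)}\alpha^{p-1-p_0}\,d\alpha=\frac{g(x)^{p-p_0}}{p-p_0}$ and $\int_{g(x)}^\infty\alpha^{p-1-p_1}\,d\alpha=\frac{g(x)^{p-p_1}}{p_1-p}$, both finite because $p_0<p<p_1$; the right-hand side collapses to $\bigl(\frac{p(2A_0)^{p_0}}{p-p_0}+\frac{p(2A_1)^{p_1}}{p_1-p}\bigr)\int g^p$, which is the asserted inequality with $C(p_0,p_1,p,A_0,A_1)=\bigl(\frac{p(2A_0)^{p_0}}{p-p_0}+\frac{p(2A_1)^{p_1}}{p_1-p}\bigr)^{1/p}$. (Replacing $G_\alpha$ by $\{g>c\alpha\}$ and optimizing over $c>0$ sharpens the constant but is unnecessary.)

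This is the textbook Marcinkiewicz proof, so I do not expect a genuine obstacle; the only place where the mixed-norm structure really enters — and the point I would state carefully — is that truncating by the set $G_\alpha$, which involves $x$ only, commutes with the inner $L^\infty_y$ norm. That is what keeps the two pieces inside $L^{p_0}(L^\infty)$ and $L^{p_1}(L^\infty)$ so that the hypotheses can be invoked, and, after invoking them through the supremum over $y'$, it is what makes the resulting distribution-function estimate for $Tf(\cdot,y)$ independent of the frozen variable $y$ — so that a single scalar Marcinkiewicz computation controls the mixed norm on the left.
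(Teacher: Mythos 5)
Your proof is correct, and while the overall strategy is the same Marcinkiewicz interpolation scheme the paper uses, the decomposition itself is genuinely different, and cleaner. You split $f = f\mathbf{1}_{G_\alpha} + f\mathbf{1}_{G_\alpha^c}$ by truncating on the $x$-set $G_\alpha = \{x : g(x) > \alpha\}$ where $g(x) = \|f(x,\cdot)\|_{L^\infty_y}$. The paper instead truncates pointwise in both variables, setting $f_0^\alpha = f\mathbf{1}_{\{|f(x,y)|>\alpha\}}$ and $f_1^\alpha = f\mathbf{1}_{\{|f(x,y)|\le\alpha\}}$, and then has to recover the mixed norms of these pieces from $g$. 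The advantage of your choice is exactly the one you flag: since $\mathbf{1}_{G_\alpha}$ depends on $x$ only, the inner $L^\infty_y$ norm passes through the truncation exactly, giving $\|(f\mathbf{1}_{G_\alpha})(x,\cdot)\|_{L^\infty_y} = g(x)\mathbf{1}_{G_\alpha}(x)$ and likewise on $G_\alpha^c$, with no further argument needed. The paper's route requires the implication ``$\|f(x,\cdot)\|_{L^\infty_y} > \alpha \Rightarrow |f(x,y)| > \alpha$ for a.e.\ $y$,'' which is not actually true (the essential supremum only controls a set of positive measure), and consequently its claimed identity $\|\,\|f_1^\alpha\|_{L^\infty_y}\|_{L^{p_1}_x}^{p_1} = \int_{\{g\le\alpha\}} g^{p_1}\,dx$ is really only an inequality up to an extra $\alpha^{p_1}|\{g>\alpha\}|$ term; this does not sink the paper's proof (the extra term integrates to a finite multiple of $\|g\|_{L^p}^p$ and only worsens the constant), but your slicing-by-$x$ decomposition sidesteps the issue entirely and arrives at the same constant $p\bigl(\tfrac{(2A_0)^{p_0}}{p-p_0}+\tfrac{(2A_1)^{p_1}}{p_1-p}\bigr)$ by a tighter argument.
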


\begin{proof}
$\quad$ Pick any $p_0<p<p_1$. For any $f \in L^p(\mathbb{R},L^\infty(\mathbb{R}))$, we have $||\,||f(x,y)||_{L_y^{\infty}}||_{L_x^{p}} <\infty$. Now we decompose $f$ into a sume of two functions defined as follows:
\[
    f_0^\alpha(x,y)= 
\begin{cases}
    f(x,y)& \text{for } |f(x,y)| > \alpha \\
    0              &\text{for } |f(x,y)| \leq \alpha
\end{cases}
\]
\[
    f_1^\alpha(x,y)= 
\begin{cases}
    f(x,y)& \text{for } |f(x,y)| \leq \alpha \\
    0              &\text{for } |f(x,y)| > \alpha.
\end{cases}
\]
By our definition, $f=f_0^\alpha+f_1^\alpha$.

Next, we note that $f_0^\alpha(\cdot, y) \in L^{p_0}(\mathbb{R})$ and $f_1^\alpha(\cdot, y) \in L^{p_1}(\mathbb{R})$ for $a.e. \, y$. Indeed, for $a.e.\, y$, we have 
\begin{align*}
||f_0^\alpha(\cdot, y)||_{L_x^{p_0}}^{p_0} &= \int_{\{x:|f(x,y)|>\alpha\}} |f(x,y)|^{p_0} dx = \int_{\{x:|f(x,y)|>\alpha\}} |f(x,y)|^{p} |f(x,y)|^{p_0-p} dx \\
&\leq (\alpha)^{p_0-p} \int_{\{x:|f(x,y)|>\alpha\}} ||f(x,\cdot)||_{L_y^{\infty}}^{p} dx \leq (\alpha)^{p_0-p}||\, ||f(x,y)||_{L_y^{\infty}}||_{L_x^{p}}
\end{align*}
and
\begin{align*}
||f_1^\alpha(\cdot, y)||_{L_x^{p_1}}^{p_1}&= \int_{\{x:|f(x,y)| \leq \alpha\}} |f(x,y)|^{p_1} dx = \int_{\{x:|f(x,y)| \leq \alpha\}} |f(x,y)|^{p} |f(x,y)|^{p_1-p} dx \\
&\leq (\alpha)^{p_1-p} \int_{\{x:|f(x,y)| \leq \alpha\}} ||f(x,\cdot)||_{L_y^{\infty}}^{p} dx \leq (\alpha)^{p_1-p}||\, ||f(x,y)||_{L_y^{\infty}}||_{L_x^{p}}.
\end{align*}

Since $T$ is sublinear, $|T(f)|\leq|T(f_0^\alpha)|+|T(f_1^\alpha)|$. Hence, $d_{Tf(\cdot,y)}(\alpha) \leq d_{Tf_0^\alpha(\cdot,y)}(\frac{\alpha}{2}) + d_{Tf_1^\alpha(\cdot,y)}(\frac{\alpha}{2})$ for $a.e. \, y$. Therefore, for $a.e. \, y$, we have 
\begin{align*}
d_{Tf(\cdot,y)}(\alpha) &\leq d_{Tf_0^\alpha(\cdot,y)}(\frac{\alpha}{2}) + d_{Tf_1^\alpha(\cdot,y)}(\frac{\alpha}{2}) \\
& =(\frac{\alpha}{2})^{-p_0} \left( (\frac{\alpha}{2}) \left( d_{Tf_0^\alpha(\cdot,y)}(\frac{\alpha}{2}) \right)^{\frac{1}{p_0}}   \right)^{p_0}+(\frac{\alpha}{2})^{-p_1} \left( (\frac{\alpha}{2}) \left( d_{Tf_1^\alpha(\cdot,y)}(\frac{\alpha}{2}) \right)^{\frac{1}{p_1}}   \right)^{p_1} \\
& \leq (\frac{\alpha}{2})^{-p_0} ||Tf_0^\alpha (\cdot, y) ||_{L_x^{p_0,\infty}}^{p_0} + (\frac{\alpha}{2})^{-p_1} ||Tf_1^\alpha (\cdot, y) ||_{L_x^{p_1,\infty}}^{p_1} \\
& \leq (\frac{\alpha}{2})^{-p_0} ||\,||Tf_0^\alpha (x, y) ||_{L_x^{p_0,\infty}} ||_{L_y^\infty}^{p_0} + (\frac{\alpha}{2})^{-p_1} ||\,||Tf_1^\alpha (x, y) ||_{L_x^{p_1,\infty}}||_{L_y^\infty}^{p_1} \\
& \leq A_0^{p_0}(\frac{\alpha}{2})^{-p_0} ||\,||f_0^\alpha (x, y) ||_{L_y^\infty} ||_{L_x^{p_0}}^{p_0} + A_1^{p_1}(\frac{\alpha}{2})^{-p_1} ||\,||f_1^\alpha (x, y) ||_{L_y^\infty}||_{L_x^{p_1}}^{p_1} .
\end{align*}
Now we will compute $||\,||f_0^\alpha (x, y) ||_{L_y^\infty} ||_{L_x^{p_0}}^{p_0}$ and $||\,||f_1^\alpha (x, y) ||_{L_y^\infty}||_{L_x^{p_1}}^{p_1}$ in terms of $f$.

First, we observe that 

\begin{align*}
||f(x,y)||_{L_y^{\infty}} \leq \alpha & \Rightarrow \text{ for } a.e.\, y\quad |f(x,y)| \leq \alpha \\
& \Rightarrow \text{ for } a.e. \,y \quad f_0^\alpha(x,y)=0 \\
& \Rightarrow ||f_0^\alpha(x,y)||_{L_y^\infty}=0
\end{align*}

and 

\begin{align*}
||f(x,y)||_{L_y^{\infty}} > \alpha & \Rightarrow \text{ for } a.e.\, y\quad |f(x,y)| > \alpha \\
& \Rightarrow \text{ for } a.e. \,y \quad f_0^\alpha(x,y)=f(x,y) \\
& \Rightarrow ||f_0^\alpha(x,y)||_{L_y^\infty}= ||f(x,y)||_{L_y^\infty}.
\end{align*}

Hence, 

\begin{align*}
||\,||f_0^\alpha (x, y) ||_{L_y^\infty} ||_{L_x^{p_0}}^{p_0} &= \int_{\mathbb{R}} || f_0^\alpha(x,y) ||_{L_y^\infty}^{p_0} dx =\int_{\{x: ||f(x,y)||_{L_y^\infty}>\alpha\}} ||f(x,y)||_{L_y^\infty}^{p_0} dx.
\end{align*}

Similarly, we get $$ ||\,||f_1^\alpha (x, y) ||_{L_y^\infty}||_{L_x^{p_1}}^{p_1} = \int_{\{ x: ||f(x,y)||_{L_y^\infty} \leq \alpha \}} ||f(x,y)||_{L_y^\infty}^{p_1} dx .$$

With $||\,||f_0^\alpha (x, y) ||_{L_y^\infty} ||_{L_x^{p_0}}^{p_0}$ and $||\,||f_1^\alpha (x, y) ||_{L_y^\infty}||_{L_x^{p_1}}^{p_1}$ computed, we can now estimate $d_{Tf(\cdot, y)}(\alpha)$ by $||f(x,y)||_{L_y^\infty}$. 

\begin{align*}
d_{Tf(\cdot, y)}(\alpha) &\leq (\frac{\alpha}{2})^{-p_0} {A_0}^{p_0} ||\,||Tf_0^\alpha (x, y) ||_{L_x^{p_0,\infty}} ||_{L_y^\infty}^{p_0} + (\frac{\alpha}{2})^{-p_1} {A_1}^{p_1}||\,||Tf_1^\alpha (x, y) ||_{L_x^{p_1,\infty}}||_{L_y^\infty}^{p_1} \\
& \leq (\frac{\alpha}{2})^{-p_0} {A_0}^{p_0} \int_{\{x: ||f(x,y)||_{L_y^\infty}>\alpha\}} ||f(x,y)||_{L_y^\infty}^{p_0} dx \\
& + (\frac{\alpha}{2})^{-p_1} {A_1}^{p_1} \int_{\{ x: ||f(x,y)||_{L_y^\infty} \leq \alpha \}} ||f(x,y)||_{L_y^\infty}^{p_1} dx.
\end{align*}

Finally, we estimate $|| Tf(\cdot, y) ||_{L_x^p}^p$ as follows:

\begin{align*}
||Tf(\cdot, y) ||_{L_x^p}^p & = p\int_0^\infty \alpha^{p-1} d_{Tf(\cdot, y)}(\alpha) d\alpha \\
& \leq p(2A_0)^{p_0}\int_0^\infty \alpha^{p-p_0-1} \int_{\{x: ||f(x,y)||_{L_y^\infty}>\alpha\}} ||f(x,y)||_{L_y^\infty}^{p_0} dx \, d\alpha \\
& + p(2A_1)^{p_1} \int_0^\infty \alpha^{p-p_1-1} \int_{\{ x: ||f(x,y)||_{L_y^\infty} \leq \alpha \}} ||f(x,y)||_{L_y^\infty}^{p_1} dx\, d\alpha \\
& = p(2A_0)^{p_0} \int_{\mathbb{R}} ||f(x,y)||_{L_y^\infty}^{p_0} \int_0^{||f(x,y)||_{L_y^\infty}} \alpha^{p-p_0-1} d\alpha \, dx \\
& + p(2A_1)^{p_1} \int_{\mathbb{R}} ||f(x,y)||_{L_y^\infty}^{p_1} \int_{||f(x,y)||_{L_y^\infty}}^\infty \alpha^{p-p_1-1} d\alpha \, dx \\
& = p\left( \frac{(2A_0)^{p_0}}{p-p_0} + \frac{(2A_1)^{p_1}}{p_1-p} \right) ||\,||f(x,y)||_{L_y^{\infty}}||_{L_x^p}^p
\end{align*}
for $a.e. \, y$.

Therefore, we conclude that $$ || \, ||Tf(x,y)||_{L_x^{p}}||_{L_y^{\infty}} \leq C(p_0,p_1,p,A_0,A_1) || \, ||f(x,y)||_{L_y^{\infty}}||_{L_x^{p}} . $$

\end{proof}

\newtheorem*{theorem}{Proposition 2.3}
\begin{theorem}
For $T$, a Calder\'on-Zygmund operator given by a kernel $K$ defined on $\mathbb{R}^2-\{0\}$ such that
$$|| \, ||Tf(x,y)||_{L_x^{p}}||_{L_y^{\infty}} \leq C || \, ||f(x,y)||_{L_y^{\infty}}||_{L_x^{p}}$$
we have 

$$|| \, ||Tf(x,y)||_{L_x^{1,\infty}}||_{L_y^{\infty}} \leq D || \, ||f(x,y)||_{L_y^{\infty}}||_{L_x^{1}}$$

\end{theorem}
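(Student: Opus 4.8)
The plan is to run the usual Calder\'on--Zygmund decomposition in the $x$-variable only, treating the $y$-variable as a parameter, and to keep the $L_y^\infty$ norm outside throughout. Fix $f$ with $M:=\||\,\|f(x,y)\|_{L_y^\infty}\|_{L_x^1}<\infty$ and fix a height $\alpha>0$. Apply the one-dimensional Calder\'on--Zygmund decomposition to the function $x\mapsto \|f(x,y)\|_{L_y^\infty}$ at height $\alpha$: this produces a collection of disjoint dyadic intervals $\{I_k\}$ with $\sum_k|I_k|\le M/\alpha$, on whose complement $\|f(x,y)\|_{L_y^\infty}\le\alpha$ for a.e.\ $x$, and with the usual averaging control $\frac1{|I_k|}\int_{I_k}\|f(x,y)\|_{L_y^\infty}\,dx\le 2\alpha$. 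Because the selection is made using the $L_y^\infty$ profile, the same intervals work simultaneously for every $y$: set $g(x,y)$ equal to $f(x,y)$ off $\bigcup_k I_k$ and equal to the $x$-average $\frac1{|I_k|}\int_{I_k}f(t,y)\,dt$ on each $I_k$, and $b(x,y)=f(x,y)-g(x,y)=\sum_k b_k(x,y)$ with each $b_k(\cdot,y)$ supported in $I_k$ and having mean zero in $x$.

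Next I would estimate the two pieces separately, in each case first taking the $x$-distribution function at height $\alpha/2$ and only afterwards the sup over $y$. For the good part $g$: one checks $\|g(x,y)\|_{L_y^\infty}\le 2\alpha$ pointwise in $x$ (on the $I_k$ use Minkowski's integral inequality to pull $\|\cdot\|_{L_y^\infty}$ inside the average, then the averaging bound; off the $I_k$ it is $\le\alpha$) and $\||\,\|g(x,y)\|_{L_y^\infty}\|_{L_x^1}\le 3M$ or so. Hence $\||\,\|g(x,y)\|_{L_y^\infty}\|_{L_x^2}^2\le 2\alpha\cdot\||\,\|g(x,y)\|_{L_y^\infty}\|_{L_x^1}\lesssim\alpha M$, and applying the hypothesized $L_x^2$ mixed-norm bound (the case $p=2$ of the assumption) together with Chebyshev in $x$ gives, for a.e.\ $y$,
\[
d_{Tg(\cdot,y)}(\alpha/2)\le\frac{4}{\alpha^2}\,\|Tg(\cdot,y)\|_{L_x^2}^2\le\frac{4C^2}{\alpha^2}\,\||\,\|g(x,y)\|_{L_y^\infty}\|_{L_x^2}^2\lesssim\frac{C^2 M}{\alpha},
\]
and since the right-hand side is independent of $y$ it also bounds $\sup_y d_{Tg(\cdot,y)}(\alpha/2)$, which is exactly $\||\,\|Tg(x,y)\|_{L_x^{1,\infty}_{\alpha/2}}\|_{L_y^\infty}$-type control at this level. (I have written $p=2$ here for concreteness; the same works for any exponent $p$ appearing in the hypothesis, replacing $2$ by $p$ and the square by the $p$-th power throughout.)

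For the bad part $b$, let $I_k^*$ be the interval with the same center as $I_k$ and, say, twice the length, and put $\Omega^*=\bigcup_k I_k^*$, so $|\Omega^*|\le 2M/\alpha$; on $\mathbb{R}\setminus\Omega^*$ we estimate $Tb_k(\cdot,y)$ using the mean-zero property of $b_k(\cdot,y)$ in $x$ and the kernel smoothness. Here the key point, and the step I expect to be the main obstacle, is that $T$ is genuinely a two-dimensional operator, so ``$Tb_k(x,y)$ for fixed $y$'' is not a one-dimensional singular integral of $b_k(\cdot,y)$; one must write $Tb_k(x,y)=\int\!\!\int K(x-s,y-t)b_k(s,t)\,ds\,dt$, and to exploit cancellation in $s$ alone one wants a bound on $\partial_s K$. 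This is precisely where the hypotheses on $K$ from Theorem 1.6, namely $|\partial_1 K(x,y)|\lesssim(x^2+y^2)^{-1}$ (which follows from the stated $|\partial_1\partial_2K|$ and $|\partial_2 K|$ bounds, or should be added as a hypothesis), enter: for $x\notin I_k^*$ one gets $|Tb_k(x,y)|\le\int_{I_k}|b_k(s,y)|\sup_{\sigma\in I_k}|\partial_1 K(x-\sigma,y-t)|\,|I_k|\,\ldots$, and after integrating in $x$ over $\mathbb{R}\setminus I_k^*$ the usual Hörmander-type computation yields $\int_{\mathbb{R}\setminus I_k^*}\|Tb_k(x,y)\|_{L_y^\infty}\,dx\lesssim\int_{I_k}\|b_k(x,y)\|_{L_y^\infty}\,dx\lesssim\alpha|I_k|$; summing in $k$ gives $\||\,\mathbf{1}_{\mathbb{R}\setminus\Omega^*}(x)\|Tb(x,y)\|_{L_y^\infty}\|_{L_x^1}\lesssim M$, hence $d_{Tb(\cdot,y)}(\alpha/2)\lesssim |\Omega^*|+\frac{2}{\alpha}\cdot M/\ldots\lesssim M/\alpha$ uniformly in $y$. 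Combining the good and bad estimates, $\sup_y d_{Tf(\cdot,y)}(\alpha)\le\sup_y d_{Tg(\cdot,y)}(\alpha/2)+\sup_y d_{Tb(\cdot,y)}(\alpha/2)\lesssim M/\alpha$, i.e.\ $\||\,\|Tf(x,y)\|_{L_x^{1,\infty}}\|_{L_y^\infty}\le D\,M$, which is the claim. If the kernel $\partial_1 K$ bound is not available in the generality stated, the fallback is to note that this Proposition is only ever applied to $R_{12}$, whose kernel manifestly satisfies all the needed derivative estimates (as already observed in the Remark after Theorem 1.6), so one may simply add that regularity to the hypotheses.
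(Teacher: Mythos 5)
Your overall scheme — CZ--decompose the scalar function $x\mapsto\|f(x,y)\|_{L_y^\infty}$, define the good/bad parts on the resulting intervals, handle the good part with Chebyshev plus the hypothesized mixed-norm bound, and handle the bad part via cancellation — is exactly the paper's. The good-part estimate is fine (the paper runs it with the given exponent $p$; your $p=2$ is just a notational choice). The gap is in the bad-part estimate.

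You assert that because $T$ is two-dimensional and the cancellation is only in the $s$-variable, one needs a pointwise bound on $\partial_1 K$, and hence needs to import or add the extra regularity of Theorem 1.6. This is not so. Condition (2) of Definition 1.1, the Hörmander condition on $\mathbb{R}^2$, already suffices. Writing $h=s-x_Q$ (so $|h|\le|Q|/2$) and changing variables $u=x-x_Q$, $v=y-t$, the quantity to bound is
\begin{align*}
\int_{|u|>|Q|}\int_{\mathbb{R}}\bigl|K(u-h,v)-K(u,v)\bigr|\,dv\,du .
\end{align*}
Since $|u|>|Q|\ge 2|h|$ forces $|(u,v)|\ge|u|>2|(h,0)|$, this integral is over a subset of $\{z\in\mathbb{R}^2:|z|>2|(h,0)|\}$, and the Hörmander condition with the planar translation $(h,0)$ bounds it by $B$ directly. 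No derivative bounds on $K$ are used; the ``strip complement'' in $(u,v)$ sits inside the ``ball complement,'' which is the whole observation. This is what the paper's line ``Thus, by our selection of $Q^*$, $\ldots\le B$'' is invoking.

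Two smaller issues with your proposed detour. The bound you cite, $|\partial_1 K(x,y)|\lesssim(x^2+y^2)^{-1}$, is too weak to close the argument: integrating it in $t$ gives only $O(|u|^{-1})$, and the subsequent $u$-integral over $|u|>|Q|$ diverges logarithmically; you would actually need $|\partial_1 K|\lesssim(x^2+y^2)^{-3/2}$. And there is no reason that bound should follow from the $|\partial_2 K|$ and $|\partial_1\partial_2K|$ hypotheses of Theorem 1.6, which say nothing about $\partial_1 K$ alone. So your ``fallback'' of adding the hypothesis is both unnecessary and, as stated, still insufficient. The fix is simply to drop the $\partial_1 K$ route and use the Hörmander condition as above.
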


\begin{proof}

It suffices to prove the inequality for $f \in \mathcal{S}(\mathbb{R}^2)$. Fix any $\alpha>0$. We apply Calder\'on-Zygmund decomposition to $||f(x,y)||_{L_y^\infty}$, i.e. we can find a collection of intervals $\mathcal{B}=\{ Q \}$ such that 

\begin{align*}
&|\bigcup_{\mathcal{B}} Q| \leq \frac{1}{\alpha} ||\,|| f(x,y)||_{L_y^\infty}||_{L_x^1} \quad \alpha < \frac{1}{|Q|} \int_Q ||f(x,y)||_{L_y^\infty} dx \leq 2\alpha \\
&||f(x,y)||_{L_y^\infty}=\tilde g(x)+\tilde b(x)
\end{align*}
where $\tilde b(x)=\sum_{\mathcal{B}} \chi_{Q}||f(x,y)||_{L_y^\infty} $ and $|\tilde g(x)|\leq \alpha$.

Now we define 
\[
b(x,y) \coloneqq \sum_{Q \in \mathcal{B}} \chi_{Q}(x)f(x,y)
\]
\[
g(x,y) \coloneqq f(x,y)-b(x,y)
\]
\[
f_1(\cdot,y)\coloneqq g(\cdot,y)+\sum_{Q\in \mathcal{B}} \chi_Q \left( \frac{1}{|Q|} \int_Q f(x,y) dx \right)
\]
\[
f_2(\cdot,y)\coloneqq b(\cdot,y)-\sum_{Q\in \mathcal{B}} \chi_Q \left( \frac{1}{|Q|} \int_Q f(x,y) dx \right).
\]

Hence we have $f=f_1+f_2$. Define $f_Q(x,y) = \chi_Q\left(f(x,y) -\frac{1}{|Q|}\int_Q f(x,y)dx \right)$. Thus, $f_2(x,y)=\sum_{\mathcal{B}}f_Q(x,y)$ and $\int f_Q(x,y) dx =0 $. Also, we have
\begin{align*}
||f_1(x,y)||_{L_y^\infty} &\leq ||g(x,y)||_{L_y^\infty}+\sum_{\mathcal{B}} \chi_Q \frac{1}{|Q|} \int_Q ||f(x,y)||_{L_y^\infty} dx \\
&\leq \tilde g(x)+2\alpha \sum_{\mathcal{B}}\chi_Q.
\end{align*}
Thus, $|| \,||f_1(x,y)||_{L_y^\infty}||_{L_x^\infty} \leq 2\alpha$. Moreover,
\begin{align*}
|| \,||f_1(x,y)||_{L_y^\infty}||_{L_x^1} &\leq \int|\tilde g(x)|dx + \sum_{\mathcal{B}} \int_Q ||f(x,y)||_{L_y^\infty}dx \\
&\leq || \,||f(x,y)||_{L_y^\infty}||_{L_x^1}.
\end{align*}

Now we estimate $d_{Tf(\cdot,y)}(\alpha)$ as follows:
\begin{align*}
d_{Tf(\cdot,y)}(\alpha) &= |\{ x \in \mathbb{R}: |Tf(x,y)|>\alpha \}| \\
&\leq |\{ x \in \mathbb{R}: |Tf_1(x,y)|>\frac{\alpha}{2} \}| +|\{ x \in \mathbb{R}: |Tf_2(x,y)|>\frac{\alpha}{2} \}|.
\end{align*}

We first estimate $|\{ x \in \mathbb{R}: |Tf_1(x,y)|>\frac{\alpha}{2} \}|$ using Chebyshev's Inequality, what we observed above, and our assumption on $T$:
\begin{align*}
|\{ x \in \mathbb{R}: |Tf_1(x,y)|>\frac{\alpha}{2} \}| &\leq \frac{2^p}{\alpha^p} || Tf_1(\cdot,y) ||_{L_x^p}^p \\
& \leq \frac{2^p}{\alpha^p}||\, || Tf_1(x,,y) ||_{L_x^p}||_{L_y^\infty} ^p \\
& \leq \frac{2^pC^p}{\alpha^p}||\, || f_1(x,y) ||_{L_y^\infty}||_{L_x^p} ^p\\
& =  \frac{2^pC^p}{\alpha^p} ||\, || f_1(x,y) ||_{L_y^\infty}||_{L_x^\infty}^{p-1} \: ||\, || f_1(x,y) ||_{L_y^\infty}||_{L_x^1} \\
& \leq \frac{2^{2p-1}C^p}{\alpha} ||\, || f(x,y) ||_{L_y^\infty}||_{L_x^1} .
\end{align*}

Next, we estimate $|\{ x \in \mathbb{R}: |Tf_2(x,y)|>\frac{\alpha}{2} \}|$. For each $Q$, let $Q^*$ be the interval such that $Q$ and $Q^*$ have the same center, and $|Q^*|=2|Q|$.

\begin{align*}
|\{ x \in \mathbb{R}: |Tf_2(x,y)|>\frac{\alpha}{2} \}| & \leq |\cup_{\mathcal{B}} Q^*|+ |\{ x \in \mathbb{R}-\cup_{\mathcal{B}} Q^* : |Tf_2(x,y)|>\frac{\alpha}{2} \} | \\
&\leq 2\sum_{\mathcal{B}} |Q| + \frac{2}{\alpha} \int_{ \mathbb{R}-\cup_{\mathcal{B}} Q^*} |Tf_2(x,y)|dx \\
&\leq \frac{2}{\alpha} ||\, || f(x,y) ||_{L_y^\infty}||_{L_x^1} + \frac{2}{\alpha} \sum_{\mathcal{B}} \int_{\mathbb{R}-Q^*} |Tf_Q(x,y)|dx .
\end{align*}

We now estimate $\int_{\mathbb{R}-Q^*} |Tf_Q(x,y)|dx$. First, from $\int f_Q(x,y) dx =0 $ we conclude that for $x\in \mathbb{R}-Q^*$,

\begin{align*}
Tf_Q(x,y) & = \int_{\mathbb{R}} \int_Q K(x-s,y-t)f_Q(s,t) ds \,dt \\
&= \int_{\mathbb{R}} \int_Q \left( K(x-s,y-t) -K(x-x_Q,y-t) \right) f_Q(s,t) ds \,dt
\end{align*}
where $x_Q$ is the center of interval $Q$.

Recall that the Caler\'on-Zygmund kernal $K$ has the following property: there is a constant $B>0$ such that 
\[
\int_{\{ |x|>2|y| \}} |K(x)-K(x-y)|dx \leq B \text{ for all } y \not=0.
\]

Thus, by our selection of $Q^*$,

\begin{align*}
&\int_{\mathbb{R}-Q^*} |Tf_Q(x,y)|dx \\
& \leq \int_{\mathbb{R}-Q^*} \int_{\mathbb{R}} \int_Q | K(x-s,y-t) -K(x-x_Q,y-t) |\, |f_Q(s,t)| ds \,dt\, dx \\
& \leq \int_Q ||f_Q(s,y)||_{L_y^\infty} \left( \int_{\mathbb{R}-Q^*} \int_{\mathbb{R}} | K(x-s,y-t) -K(x-x_Q,y-t) | dt \,dx \right) \, ds \\
& \leq B \int_{Q} ||f_Q(s,y)||_{L_y^\infty} ds \\
& \leq 2B \int_{Q} ||f(x,y)||_{L_y^\infty} dx .
\end{align*}

Hence, 
\begin{align*}
|\{ x \in \mathbb{R}: |Tf_2(x,y)|>\frac{\alpha}{2} \}| &\leq \frac{2}{\alpha} ||\, || f(x,y) ||_{L_y^\infty}||_{L_x^1} + \frac{2}{\alpha} \sum_{\mathcal{B}^y} \int_{\mathbb{R}-Q^*} |Tf_Q(x,y)|dx \\
& \leq \frac{2}{\alpha} ||\, || f(x,y) ||_{L_y^\infty}||_{L_x^1} + \frac{4B}{\alpha} \sum_{\mathcal{B}^y}\int_{Q} ||f(x,\cdot)||_{L_y^\infty} dx \\
& \leq \frac{2+4B}{\alpha}||\, || f(x,y) ||_{L_y^\infty}||_{L_x^1}.
\end{align*}

Therefore, combining what we have proved now, for some $D>0$ and $a.e. \, y$,

\begin{align*}
d_{Tf(\cdot,y)}(\alpha) &= |\{ x \in \mathbb{R}: |Tf(x,y)|>\alpha \}| \\
&\leq |\{ x \in \mathbb{R}: |Tf_1(x,y)|>\frac{\alpha}{2} \}| +|\{ x \in \mathbb{R}: |Tf_2(x,y)|>\frac{\alpha}{2} \}| \\
&\leq \frac{D}{\alpha}||\, || f(x,y) ||_{L_y^\infty}||_{L_x^1}.
\end{align*}

Thus, we conclude that 

\[
|| \, ||Tf(x,y)||_{L_x^{1,\infty}}||_{L_y^{\infty}} \leq D || \, ||f(x,y)||_{L_y^{\infty}}||_{L_x^{1}}.
\]

\end{proof}

\newtheorem*{corollary}{Corollary 2.4}
\begin{corollary}

For Calder\'on-Zygmund operator $T$, the estimate $|| \, ||Tf(x,y)||_{L_x^{q}}||_{L_y^{\infty}} \leq C_q || \, ||f(x,y)||_{L_y^{\infty}}||_{L_x^{q}}$ implies the estimate $|| \, ||Tf(x,y)||_{L_x^{p}}||_{L_y^{\infty}} \leq C_p || \, ||f(x,y)||_{L_y^{\infty}}||_{L_x^{p}}$ for all $1<p\leq q$. 

\end{corollary}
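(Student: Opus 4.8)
The plan is to deduce Corollary 2.4 by chaining together Propositions 2.3 and 2.2, using the hypothesis as the upper endpoint and the mixed weak-$L^1$ bound as the lower endpoint of an interpolation. First I would feed the assumed estimate
$$||\,||Tf(x,y)||_{L_x^{q}}||_{L_y^{\infty}} \leq C_q\,||\,||f(x,y)||_{L_y^{\infty}}||_{L_x^{q}}$$
directly into Proposition 2.3 (with the exponent there taken to be $q$), which yields the endpoint mixed weak-type estimate
$$||\,||Tf(x,y)||_{L_x^{1,\infty}}||_{L_y^{\infty}} \leq D\,||\,||f(x,y)||_{L_y^{\infty}}||_{L_x^{1}}.$$
At the other end, since $||g||_{L_x^{q,\infty}} \leq ||g||_{L_x^{q}}$ pointwise in the suppressed variable, the hypothesis also gives the weak-type bound at exponent $q$:
$$||\,||Tf(x,y)||_{L_x^{q,\infty}}||_{L_y^{\infty}} \leq C_q\,||\,||f(x,y)||_{L_y^{\infty}}||_{L_x^{q}}.$$

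Next I would apply Proposition 2.2 with $p_0 = 1$ and $p_1 = q$, which is legitimate since $1 < q < \infty$, and since $T$ is linear (hence sublinear) and extends to a map on $L^{1}(\mathbb{R},L^{\infty}(\mathbb{R})) + L^{q}(\mathbb{R},L^{\infty}(\mathbb{R}))$. The two displayed weak-type estimates are precisely the hypotheses of Proposition 2.2, so its conclusion gives, for every $1 < p < q$,
$$||\,||Tf(x,y)||_{L_x^{p}}||_{L_y^{\infty}} \leq C(1,q,p,D,C_q)\,||\,||f(x,y)||_{L_y^{\infty}}||_{L_x^{p}},$$
which is the desired estimate with $C_p = C(1,q,p,D,C_q)$. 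The remaining case $p = q$ is just the hypothesis itself, so the estimate holds for all $1 < p \leq q$.

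The only point needing a little care is the bookkeeping required to legitimately invoke Proposition 2.2: one must verify that $T$ is well defined and sublinear on the sum space $L^{1}(\mathbb{R},L^{\infty}(\mathbb{R})) + L^{q}(\mathbb{R},L^{\infty}(\mathbb{R}))$, rather than merely on Schwartz functions. This is routine — linearity immediately gives sublinearity, and the a priori bounds on Schwartz functions extend $T$ to the relevant space by density — but it is the one spot where something beyond a direct citation of the earlier propositions is needed. Beyond this, I anticipate no genuine obstacle: the corollary is essentially just the statement that Propositions 2.3 and 2.2 compose.
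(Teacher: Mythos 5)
Your proof is correct and is exactly the argument the paper intends — the paper's own proof is the single line ``This follows from Proposition 2.2 and Proposition 2.3 above,'' and you have simply spelled out that deduction: apply Proposition 2.3 at exponent $q$ to obtain the weak-type $(1,1)$ endpoint, note that the strong-type hypothesis at $q$ trivially gives the weak-type bound at $q$, then interpolate via Proposition 2.2 with $p_0=1$, $p_1=q$ to cover $1<p<q$, with $p=q$ being the hypothesis itself. Your remark that one should check $T$ is well defined and sublinear on the sum space is a fair observation about a point the paper passes over silently.
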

\begin{proof}

This follows from Proposition 2.2 and Proposition 2.3 above.

\end{proof}

\newtheorem*{cor}{Theorem 2.5}
\begin{cor}

For $p>2$, there exists no $C_p$ such that $|| \, ||R_{12}f(x,y)||_{L_x^{p}}||_{L_y^{\infty}} \leq C_p || \, ||f(x,y)||_{L_y^{\infty}}||_{L_x^{p}}$ for all Schwartz function $f$. 

\end{cor}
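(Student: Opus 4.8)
The plan is to argue by contradiction, using Corollary 2.4 to transfer a hypothetical estimate at some exponent $p>2$ down to the exponent $2$, and then to contradict it with the counterexample sequence produced in Proposition 2.1. Concretely, suppose there were some $p_0>2$ and a constant $C_{p_0}$ with
\[
|| \, ||R_{12}f(x,y)||_{L_x^{p_0}}||_{L_y^{\infty}} \leq C_{p_0} || \, ||f(x,y)||_{L_y^{\infty}}||_{L_x^{p_0}}
\]
for all Schwartz $f$. The operator $R_{12}$ is a Calder\'on--Zygmund operator on $\mathbb{R}^2$ (Example 1.4 with $n=2$), so Corollary 2.4 applies with $q=p_0$. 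Since $1<2\leq p_0$, Corollary 2.4 yields a constant $C_2$ such that
\[
|| \, ||R_{12}f(x,y)||_{L_x^{2}}||_{L_y^{\infty}} \leq C_{2} || \, ||f(x,y)||_{L_y^{\infty}}||_{L_x^{2}}
\]
for all Schwartz $f$.

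Now apply this last estimate to the sequence $g_n$ constructed in the proof of Proposition 2.1. The same proof shows that $\{ || \, ||g_n(x,y)||_{L_y^{\infty}}||_{L_x^{2}} \}$ is bounded (it is dominated by $D\,||e^{-x^2}||_{L_x^2}$), while $\{ || \, ||R_{12}g_n(x,y)||_{L_x^{2}}||_{L_y^{\infty}} \}$ tends to infinity. This is incompatible with the displayed $L^2$ estimate, which would force the left-hand sequence to be bounded by $C_2$ times a bounded sequence. This contradiction shows that no such $C_{p_0}$ can exist; since $p_0>2$ was arbitrary, the claim follows.

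The only point requiring a little care is that Theorem 2.5 hypothesizes the estimate only for Schwartz functions, whereas the chain of implications behind Corollary 2.4 — namely Proposition 2.3 feeding into Proposition 2.2 — applies the assumed strong $L^p$ bound to auxiliary functions such as $f_1$, which arise from a Calder\'on--Zygmund decomposition and involve indicator factors $\chi_Q$, hence are not Schwartz. I would handle this by first recording that the a priori Schwartz estimate extends to all $f$ in the relevant mixed-norm space (for instance to bounded functions with compact support in $x$, which suffices for the decomposition steps): approximate such an $f$ by mollified truncations in $\mathcal{S}(\mathbb{R}^2)$, apply the Schwartz estimate, and pass to the limit using the $L^p(\mathbb{R}^2)$ boundedness of $R_{12}$ from Theorem 1.5 together with Fatou's lemma on the inner and outer norms. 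This density step is the main (and essentially the only) obstacle; once it is in place, the proof is the one-line contradiction above.
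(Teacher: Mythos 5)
Your argument coincides with the paper's: assume the mixed-norm bound for some $p>2$, descend to $p=2$ via Corollary 2.4, and contradict Proposition 2.1 using the sequence $g_n$. The density caveat you flag — that Proposition 2.3's proof applies the hypothesized Schwartz-only estimate to the non-Schwartz function $f_1$ built from indicators $\chi_Q$ — is a real subtlety the paper leaves implicit, and your mollification-plus-Fatou sketch (using $L^p(\mathbb{R}^2)$ boundedness of $R_{12}$ to get a.e.\ convergence along a subsequence, then Fatou on the inner $L^p_x$ norm for a.e.\ $y$ and $\operatorname*{ess\,sup}_y \liminf_\epsilon \leq \liminf_\epsilon \operatorname*{ess\,sup}_y$) is the natural way to close it.
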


\begin{proof}

Assume, for the sake of contradiction, we have such estimate for $R_{12}$ and $p>2$, then by the previous corollary, we would have such estimate for $R_{12}$ and $p=2$, which contradicts Proposition 2.1 above. 

\end{proof}

\newtheorem*{cor1}{Corollary 2.6}
\begin{cor1}

For $p\geq 2$, there exists no $C_p$ such that $|| \, ||\partial_x\partial_yu(x,y)||_{L_x^{p}}||_{L_y^{\infty}} \leq  C_p || \, ||\Delta u(x,y)||_{L_y^{\infty}}||_{L_x^{p}}$ for all Schwartz function $u$.

\end{cor1}

\begin{proof}

This proposition follows directly from Proposition 2.5 and equation (1).

\end{proof}

\section*{Acknowledgement}

$\quad$This article is the written report of a project at Stanford Undergraduate Research Institute in Mathematics(SURIM) in 2021. I would like to thank my mentor, Professor Jonathan Luk, who proposed the topic of this article. I am grateful of his guidance and help throughout the summer. I would also like to thank SURIM program for funding and support for the research and Dr Pawel Grzegrzolka for organizing this program.

\end{document}